\documentclass[12pt]{article}
\usepackage{amssymb}
\usepackage{amsmath}
\usepackage{mathtools}
\usepackage[utf8]{inputenc}
\usepackage[breaklinks=true]{hyperref}
\usepackage{comment}
\usepackage{mathrsfs}
\usepackage{geometry}
\usepackage{amsthm}
\usepackage{amsrefs}
\usepackage{float}
\usepackage{xcolor}
\geometry{a4paper, portrait, margin=1in}
\numberwithin{equation}{section}

\newtheorem{theorem}{Theorem}[section]

\newtheorem{lemma}[theorem]{Lemma}

\newcommand{\E}{\mathbb{E}}
\renewcommand{\P}{\mathbb{P}}

\newcommand{\T}{\mathbb{T}}

\newcommand{\R}{\mathbb{R}}
\newcommand{\La}{(\textbf{L1})}
\newcommand{\Lb}{(\textbf{L2})}

\newcommand{\Hg}{(\textbf{H})}

\title{Branching random walks with regularly varying perturbations}
\author{Krzysztof Kowalski\footnote{University of Wrocław, Poland. Email: krzysztof.kowalski@math.uni.wroc.pl. Supported by the National Science Center, Poland (OPUS, grant number 2019/33/B/ST1/00207)
\\ MSC2020 subject classification: 60J80, 60F05, 60F15
\\ keywords: branching random walk, perturbations, maximal position, central limit theorem, strong limit, derivative martingale, additive martingale, heavy-tailed distributions}}

\begin{document}

\maketitle

\begin{abstract}
    We consider a modification of classical branching random walk, where we add i.i.d. perturbations to the positions of the particles in each generation. In this model, which was introduced and studied by Bandyopadhyay and Ghosh (2023), perturbations take form $\frac 1 \theta \log\frac X E$, where $\theta$ is a positive parameter, $X$ has arbitrary distribution $\mu$ and $E$ is exponential with parameter 1, independent of $X$. Working under finite mean assumption for $\mu$, they proved almost sure convergence of the rightmost position to a constant limit, and identified the weak centered asymptotics when $\theta$ does not exceed certain critical parameter $\theta_0$. This paper complements their work by providing weak centered asymptotics for the case when $\theta > \theta_0$ and extending the results to $\mu$ with regularly varying tails. We prove almost sure convergence of the rightmost position and identify the appropriate centering for the weak convergence, which is of form $\alpha n + c \log n$, with constants $\alpha$, $c$ depending on the ratio of $\theta$ and $\theta_0$. We describe the limiting distribution and provide explicitly the constants appearing in the centering.
\end{abstract}
\section{Introduction}

Branching random walk on the real line, abbreviated as BRW, is constructed as follows. The process starts with a  single particle placed  at 0. 
Given a point process ${\mathcal Z} = \sum_{k=1}^N \delta_{\xi_k}$ on $\R$, where $N$ is a random variable on $\mathbb{N}_0$, the original particle at time 1 dies and gives birth to $N$ particles positioned according to ${\mathcal Z}$. 
These particles are called the first generation of the process. At time 2, each of these particles reproduces independently 
and has offspring with positions relative to their parent’s position given by an independent copy of ${\mathcal Z}$. 

The process continues infinitely. As a result we obtain a marked tree $\left(S, \mathbb{T}\right)$, where the tree $\mathbb{T}$ is the set of all particles  equipped in the natural tree structure, and $S_v$ is the position of a given particle $v \in \mathbb{T}$. We write $|v|$ for the generation of $v$. For a BRW with displacements given by $\xi$,
let $R_n = \sup_{|v|=n} S_v$ denote the position of the most right particle at time $n$. There is a rich literature concerning the asymptotic behaviour of $R_n$. In 1976 Biggins \cite{biggins2} proved under minimal assumptions the law of large numbers for $R_n$,  i.e.  $\frac {R_n} n$ converges almost surely to a constant $\alpha$. The corresponding limit theorem was proved by Aïdékon \cite{aidekon} in 2013, who showed that $R_n - \alpha n + c \log n $ converges in distribution to a random shift of the Gumbel distribution.
We refer to Shi \cite{brw} for an extensive description of recent results on branching random walks.

In this paper we consider a perturbed branching random walk $S^*$, which is a modification of $S$, in which we add a random perturbation
to the position of every particle,
i.e.
\[
S^*_v = S_v + X_v
\]
where $\{X_v\}_{v \in \mathbb{T}}$ are i.i.d. random variables independent of $S$. Note that the perturbation added to the position of a vertex $v \in \T$ does not influence the positions of its offspring, which explains that the process is sometimes called \textit{last progeny modified branching random walk}. In this paper we study the model introduced by Bandyopadhyay and Ghosh in \cite{lpm-brw}, where the perturbations have the form
\[
X_{v}(\theta) = \frac 1 \theta \log \frac{Y_v} {E_v}
\]
for a given positive real number $\theta$, and $\{Y_v\}_{v \in \mathbb{T}}$ that are independent positive random variables with distribution $\mu$, independent of $\{E_v\}_{v \in \mathbb{T}}$, which are i.i.d. with distribution $\text{Exp}{\left(1\right)}$. This model was further studied in the context of large deviations \cite{large-deviations} and inhomogeneous time BRW \cite{inhm-time}. Main motivation for considering it comes from the connection between the supremum of the perturbed BRW $R^*_n
    (\theta) = \sup_{|v|=n} S^*_v$  and random weighted sums. More precisely,  Theorem 3.6 in \cite{lpm-brw} states, that
\begin{align}\label{coupling}
\theta R^*_n(\theta) \stackrel{d}{=} \log Y_n(\theta) -\log E 
\end{align}
where $Y_n(\theta) = \sum_{|v|=n} e^{\theta S_v} Y_v$ and $E$ is exponential with parameter 1, independent of $Y_n(\theta)$. We will sometimes write $R^*_n = R^*_n(\theta)$ and $Y_n = Y_n(\theta)$ if the parameter is clear from the context.  The asymptotics of $R^*_n$ will be related very closely to the behaviour of $Y_n$. As it turns out, the sequence $Y_n$ is quite well described in the literature, see e.g. \cite{boundary} and \cite{above}.  
Properties of $R_n^*$ are obviously determined by the parameter $\theta$. It turns out that one needs to control its position with respect to the critical parameter $\theta_0$ defined as 
\[
\theta_0 = \inf \left\{\theta > 0 : \nu(\theta) = \theta \nu'(\theta)\right\}
\]
where 
\[\nu (\theta) = \log \mathbb{E} \left[\sum_{i=1}^N e^{\theta \xi_i}\right] \]
is the log-Laplace transform of $\mathcal{Z}$, and
$\nu'(\theta) =  e^{- \nu(\theta)}\E \left[ \sum_{i=1}^N \xi_i e^{\theta \xi_i} \right]$. Note that $\nu$ does not have to be differentiable at $\theta$ for this quantity to exist, and that in general $\theta_0$ may be infinite. 

In \cite{lpm-brw} branching random walks with such perturbations where studied in the case when $\mu$ has finite mean. In particular, the authors proved that 

\begin{align*}
 \frac {R_n^*(\theta)} n \xrightarrow[ n \rightarrow \infty]{a.s.} \begin{cases} \frac {\nu\left(\theta\right)} {\theta} &  \theta < \theta_0 \\
 \frac {\nu(\theta_0)} {\theta_0} & \theta \geq \theta_0
\end{cases}
\end{align*}
 and identified weak centered asymptotics for $\theta \leq \theta_0$. However, the result for $\theta > \theta_0$ was only obtained for the degenerated perturbations with $\mu = \delta_1$. The goal of this paper is to complete the results from \cite{lpm-brw} by providing the missing weak centered asymptotics for the so called above the boundary case, and to extend them to $\mu$ with infinite mean, with special focus on distributions with regularly varying tails.

\section{Main results} 
Let $\gamma \in \left(0,1\right)$. Our main assumption for $\mu$ is as follows:

\begin{equation}
    \tag{\textbf{H}} x^\gamma \left(1 - F(x)\right) \xrightarrow[ x\rightarrow +\infty]{} c_+ >0 
\end{equation} 
where $F$ is the probability distribution function of $\mu$.

This assumption tells us that $\mu$ belongs to the domain of attraction of a stable law with characteristic function
\begin{equation}\label{g}
\tilde{g}\left(t\right) = e^{-k|t|^\gamma  \left(1 - i \tan{\left(\frac {\pi \gamma} {2}\right)} \text{sign}t\right)}    
\end{equation}

where
\[
k = \frac {\pi c_+} {2 \Gamma\left(\gamma\right) \sin\left(\pi \gamma /2\right)} > 0.
\]
Furthermore, it means that if $Y$ has distribution $\mu$, then $\E\left[Y^\gamma\right] = \infty$, but $\E\left[Y^r\right] < \infty$ for any $r \in \left(0, \gamma\right)$. $\Hg$  will be assumed in most of the paper, however the result for the above the boundary case will be stated under more general assumption.

We assume for the rest of the paper that $\text{supp}\left(\mu\right) \subset \mathbb{R}_+$, the system survives with probability 1 (i.e. $\P (N=0) =0$), and $\nu$ is finite on some open interval $I$, with $0 \in I$. Since $\theta_0$ is finite, the last assumption guarantees, by convexity of $\nu$, that $\nu$ is differentiable on $\left(-s,\theta_0\right)$ for some $s>0$, and has the left derivative at $\theta_0$. One can also characterize $\theta_0$ as the argument minimizing $\frac {\nu(\theta)} \theta$ over $\theta > 0$. Throughout this paper, existence of finite $\theta_0$ will only be assumed when necessary. 

As proved in \cite{biggins2}, if $\theta_0$ is finite, then

\begin{align}\label{a.s. convergence of the supremum}
\frac{R_n}{n} \xrightarrow[n \rightarrow \infty]{} \frac {\nu(\theta_0)} {\theta_0}, \quad \mbox{a.s.}
\end{align}

For $\theta$ such that $\nu(\theta) < \infty$, let 
\begin{align}
W_n(\theta) = e^{-n\nu(\theta)}\sum_{|v|=n} e^{\theta S_v}.
\end{align}
$W_n(\theta)$ is called the additive martingale associated with $S$. We denote $W_n = W_n(\theta_0)$. Note that as a positive martingale $W_n(\theta)$ converges almost surely to some finite limit. If $\nu'(\theta) < \infty$, then Biggins martingale convergence theorem \cite{biggins} states, that the almost sure limit of $W_n(\theta)$ is non-degenerate if and only if $\nu'(\theta) <  \nu(\theta)/\theta$ and  \begin{equation}\label{eq:h2} 
  \E \left[W_1(\theta) \log_+ W_1(\theta)\right] < \infty. 
  \end{equation} Furthermore, the limit is then positive almost surely. The first condition is equivalent to $\theta < \theta_0$, thus
\begin{align}\label{additive martingale limit}
 W_n(\theta) \xrightarrow[ n \rightarrow \infty]{a.s.} \begin{cases} W^\infty_\theta & \mbox{if $\theta < \theta_0$  and \eqref{eq:h2} is satisfied,}\\
0 & \mbox{otherwise.}
\end{cases}
\end{align}
where $W_{\theta}^\infty$ is finite and positive almost surely. We also define the \textbf{derivative martingale} associated with $S$ as
\[
D_n = -\sum_{|v|=n} \left(\theta_0 S_v - n \nu(\theta_0) \right) e^{\theta_0 S_v - n\nu(\theta_0)}
\]

As seen in Proposition A.3 from \cite{aidekon}, under assumptions 
\begin{description}
\item[(L1)] $\theta_0 < \infty$ and \begin{align*}\E \left[\sum_{i=1}^N e^{\theta_0 \xi_i}\xi_i^2\right] < \infty. \end{align*}

\item[(L2)]
$\theta_0 < \infty$, and for $\tilde{X} = \sum_{i=1}^N e^{\theta_0 \xi_i} \xi_i$, $X = \sum_{i=1}^N  e^{\theta_0\xi_i}$
 \begin{align*}
  &\E \left[\tilde{X}\log_+\tilde{X}\right] <\infty, \\
 &\E \left[X\log_+^2 X\right] < \infty, 
 \end{align*}
 \end{description}
 where $\log_+x = \max\{0,\log x\}$ and $\log^2_+x = \left(\log_+ x\right)^2$, we have
\begin{align}\label{derivative martingale limit}
    D_n \xrightarrow[ n \rightarrow \infty]{a.s.} D_\infty
\end{align}
where $D_\infty$ is finite and positive almost surely. 

These two martingales are connected through Theorem 1.1 from \cite{aidekon-shi}, which states that under $\La$ and $\Lb$
\begin{align}\label{convergence of normalized additive martingale}
n^{\frac 1 2} W_n(\theta_0) \xrightarrow[n \rightarrow \infty]{\mathbb{P}} c_\infty D_\infty 
\end{align}

where
\[ c_\infty = \left( \frac 2 {\pi \sigma^2} \right)^{\frac 1 2} \quad \mbox{and} \quad
\sigma^2 = \E\left[ \sum_{i=1}^N \left(\theta_0 \xi_i -  \nu(\theta_0)\right)^2 e^{\theta_0 \xi_i - \nu(\theta_0)} \right].
\]
For more results on these martingales and their limits see for example Chapter 3 in \cite{brw}.

We are now ready to present our results, starting with the almost sure convergence.

\begin{theorem}\label{a.s. convergence below}
Assume that condition $\Hg$ is satisfied and $\E \left[W_1(\gamma \theta) \log_+ W_1(\gamma \theta)\right] < \infty$ for $\theta < \frac {\theta_0} \gamma$. Then

\begin{align*}
 \frac {R_n^*} n \xrightarrow[ n \rightarrow \infty]{a.s.} \frac {\nu\left(\gamma \theta\right)} {\gamma \theta}.
\end{align*}

\end{theorem}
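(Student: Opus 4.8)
The plan is to reduce everything to the growth rate of $Y_n(\theta) = \sum_{|v|=n} e^{\theta S_v} Y_v$ through an exact conditional version of the coupling \eqref{coupling}. Writing $Z_v = e^{\theta S_v} Y_v$, we have $\theta R_n^* = \max_{|v|=n}\log(Z_v/E_v)$, and since the $E_v$ are i.i.d.\ $\mathrm{Exp}(1)$ and independent of $\bigl(S,(Y_v)\bigr)$, conditioning on the walk $S$ and the weights $(Y_v)$ gives the exact formula
\begin{align*}
\P\bigl(\theta R_n^* \le a \mid S, (Y_v)\bigr) = \prod_{|v|=n} \P\bigl(E_v \ge Z_v e^{-a}\bigr) = \exp\bigl(-Y_n(\theta)\, e^{-a}\bigr).
\end{align*}
This is the conditional refinement of \eqref{coupling}. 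I would then prove the two matching almost sure bounds $\limsup_n \frac{\theta R_n^*}{n} \le \frac{\nu(\gamma\theta)}{\gamma}$ and $\liminf_n \frac{\theta R_n^*}{n} \ge \frac{\nu(\gamma\theta)}{\gamma}$ by Borel--Cantelli, noting that $\frac{R_n^*}{n} \to \frac{\nu(\gamma\theta)}{\gamma\theta}$ is equivalent to $\frac{\theta R_n^*}{n} \to \frac{\nu(\gamma\theta)}{\gamma}$.

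For the upper bound, fix $\epsilon > 0$ and put $a_n = n\bigl(\frac{\nu(\gamma\theta)}{\gamma} + \epsilon\bigr)$. Using the formula above together with the subadditivity of $x \mapsto \min(1,x)$ on $[0,\infty)$,
\begin{align*}
\P\bigl(\theta R_n^* > a_n \mid S, (Y_v)\bigr) = 1 - e^{-Y_n(\theta) e^{-a_n}} \le \sum_{|v|=n} \min\bigl(1, Z_v e^{-a_n}\bigr).
\end{align*}
Taking expectations and using $\E[\min(1, Y/u)] = \frac{1}{u}\int_0^u (1-F(y))\,dy$, assumption $\Hg$ and Karamata's theorem furnish a universal constant $C$ with $\E[\min(1, e^{\theta S_v} Y e^{-a_n}) \mid S_v] \le C\, e^{-\gamma a_n} e^{\gamma\theta S_v}$. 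Since $\E\bigl[\sum_{|v|=n} e^{\gamma\theta S_v}\bigr] = e^{n\nu(\gamma\theta)}$, this yields $\P(\theta R_n^* > a_n) \le C e^{-\gamma a_n} e^{n\nu(\gamma\theta)} = C e^{-n\gamma\epsilon}$, which is summable; Borel--Cantelli gives the upper bound. This step is self-contained and uses only the mean of the additive martingale, not its convergence.

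For the lower bound, fix $\epsilon > 0$ and put $b_n = n\bigl(\frac{\nu(\gamma\theta)}{\gamma} - \epsilon\bigr)$, so that $\P(\theta R_n^* \le b_n \mid S, (Y_v)) = \exp(-Y_n(\theta) e^{-b_n})$. The whole matter thus reduces to the key claim $\liminf_n \frac{1}{n}\log Y_n(\theta) \ge \frac{\nu(\gamma\theta)}{\gamma}$ almost surely. Granting it, on an almost sure event one has $Y_n(\theta) \ge e^{n(\nu(\gamma\theta)/\gamma - \epsilon/2)}$ for all large $n$, so $\P(\theta R_n^* \le b_n \mid S, (Y_v)) \le \exp(-e^{n\epsilon/2})$ eventually; since these bounds are almost surely summable, a conditional Borel--Cantelli argument gives $\liminf_n \frac{\theta R_n^*}{n} \ge \frac{\nu(\gamma\theta)}{\gamma} - \epsilon$ almost surely, and letting $\epsilon \downarrow 0$ along a sequence completes the proof modulo the claim.

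The \textbf{main obstacle} is this claim on the heavy-tailed sum $Y_n(\theta)$. I would bound $Y_n(\theta) \ge \max_{|v|=n} Z_v$ and estimate, conditionally on $S$ (where only the independent weights $Y_v$ remain random),
\begin{align*}
\P\bigl(\max_{|v|=n} Z_v < x_n \mid S\bigr) = \prod_{|v|=n} F\bigl(x_n e^{-\theta S_v}\bigr) \le \exp\Bigl(-\sum_{|v|=n}\bigl(1 - F(x_n e^{-\theta S_v})\bigr)\Bigr),
\end{align*}
with $x_n = e^{n(\nu(\gamma\theta)/\gamma - \epsilon/2)}$. Because $\gamma\theta < \theta_0$ and $\theta_0$ minimizes $\nu(\theta)/\theta$, the map $\theta\mapsto\nu(\theta)/\theta$ is strictly decreasing on $(0,\theta_0)$, giving the strict inequality $\frac{\nu(\theta_0)}{\theta_0} < \frac{\nu(\gamma\theta)}{\gamma\theta}$; hence I can fix $\beta$ with $\frac{\nu(\theta_0)}{\theta_0} < \beta < \frac{\nu(\gamma\theta)}{\gamma\theta}$. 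By \eqref{a.s. convergence of the supremum}, almost surely every $|v|=n$ eventually satisfies $S_v \le \beta n$, whence $x_n e^{-\theta S_v} \ge x_n e^{-\theta\beta n} \to \infty$ for $\epsilon$ small, and regular variation $\Hg$ gives $1 - F(x_n e^{-\theta S_v}) \ge \frac{c_+}{2} x_n^{-\gamma} e^{\gamma\theta S_v}$ uniformly, so
\begin{align*}
\sum_{|v|=n}\bigl(1 - F(x_n e^{-\theta S_v})\bigr) \ge \tfrac{c_+}{2}\, x_n^{-\gamma} e^{n\nu(\gamma\theta)} W_n(\gamma\theta) = \tfrac{c_+}{2}\, e^{n\gamma\epsilon/2}\, W_n(\gamma\theta).
\end{align*}
Since $\gamma\theta < \theta_0$ and the hypothesis $\E[W_1(\gamma\theta)\log_+ W_1(\gamma\theta)] < \infty$ hold, \eqref{additive martingale limit} yields $W_n(\gamma\theta) \to W_{\gamma\theta}^\infty > 0$ almost surely, so this exponent diverges and $\P(\max_{|v|=n} Z_v < x_n \mid S) \le \exp(-c\, e^{n\gamma\epsilon/2})$ eventually, an almost surely summable bound; a conditional Borel--Cantelli argument then gives the claim. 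The crux is precisely this uniform control of a sum with polynomial (heavy) tails weighted by the exponentials $e^{\theta S_v}$ spread across the random tree: it is here that $\Hg$, through regular variation and the resulting two-sided Karamata tail estimates, together with the martingale convergence \eqref{additive martingale limit}, are indispensable.
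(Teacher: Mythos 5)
Your proof is correct, but it follows a genuinely different route from the paper's. The paper first establishes convergence in probability (its Lemma 4.2(a)) by invoking the distributional limit of Theorem \ref{convergence in distribution below the boundary} (hence the stable-limit machinery of Lemma \ref{convergence of random sums - below the boundary}), and then upgrades to almost sure convergence: the upper bound via fractional moments, $\E[Y_n(\theta)^\delta]\le e^{n\nu(\theta\delta)}\E[Y^\delta]$ with $\delta<\gamma$, plus Markov and Borel--Cantelli; the lower bound via a bootstrapping decomposition of the tree at generation $[\sqrt n]$, using the $N_{[\sqrt n]}$ i.i.d.\ copies of $R^*_{n-[\sqrt n]}$ and the growth of the population to turn the in-probability bound into a summable one. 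You instead work throughout with the exact conditional law $\P(\theta R_n^*\le a\mid S,(Y_v))=\exp(-Y_n(\theta)e^{-a})$ (the conditional refinement behind \eqref{coupling}), proving the upper bound by a truncated-first-moment/Karamata estimate combined with $\E[\sum_{|v|=n}e^{\gamma\theta S_v}]=e^{n\nu(\gamma\theta)}$, and the lower bound by a direct almost sure lower bound on $Y_n(\theta)$ itself, keeping only the maximal summand and using the lower tail estimate from $\Hg$ together with the positivity of $W^\infty_{\gamma\theta}$ (which is exactly where the $L\log L$ hypothesis and $\gamma\theta<\theta_0$ enter, the same place they enter in the paper via positivity of $H_\theta$). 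Your argument is more self-contained --- it needs neither the weak limit theorem nor the subtree bootstrapping --- and both halves are genuine Borel--Cantelli arguments with explicit rates; the minor points to make rigorous are the uniformity of the two-sided Karamata bounds (fine, since $y^\gamma(1-F(y))\to c_+$ gives bounds valid for all $y\ge y_0$, and the arguments $x_ne^{-\theta S_v}$ exceed $y_0$ uniformly once $\max_{|v|=n}S_v\le\beta n$) and the smallness restriction on $\varepsilon$ needed to fit $\beta$ between $\nu(\theta_0)/\theta_0$ and $\nu(\gamma\theta)/(\gamma\theta)-\varepsilon/(2\theta)$, which you acknowledge and which is harmless since $\varepsilon\downarrow0$. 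What the paper's route buys in exchange is that its lower-bound bootstrapping uses no tail structure on $\mu$ at all (only the in-probability statement), which is why the identical argument simultaneously covers Theorem \ref{a.s. convergence above}, whereas your tail-based lower bound is specific to the regularly varying case.
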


\begin{theorem}\label{a.s. convergence above} If $\theta_0 \leq  \theta$ and $\mu$ has finite r-th moments for all $r < \frac {\theta_0} {\theta}$, then 
\begin{align*}
   \frac {R_n^*} n \xrightarrow[ n \rightarrow \infty]{a.s.}  \frac {\nu(\theta_0)} {\theta_0}. 
\end{align*}
\end{theorem}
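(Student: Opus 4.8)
The plan is to establish the two one-sided bounds
$\limsup_n R_n^*/n \le \nu(\theta_0)/\theta_0$ and $\liminf_n R_n^*/n \ge \nu(\theta_0)/\theta_0$ separately, each almost surely, and in each case through a Borel--Cantelli argument. The guiding heuristic is that above the boundary the order of magnitude of $R_n^*$ is set by the unperturbed frontier, whose speed is $\nu(\theta_0)/\theta_0$ by \eqref{a.s. convergence of the supremum}, and that the i.i.d.\ perturbations $X_v$ are too light to move the leading term.

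For the upper bound I would use a first moment (union) estimate together with an exponential Markov inequality. For $0<\lambda<\theta_0$ (which will make the right-hand side finite) and any real $a$,
\[
\P\left(R_n^* > an\right) \le \E\left[\sum_{|v|=n}\mathbf{1}\{S_v+X_v>an\}\right] \le e^{-\lambda a n}\,\E\!\left[e^{\lambda X}\right]\,\E\!\left[\sum_{|v|=n}e^{\lambda S_v}\right],
\]
where I have used that the $X_v$ are i.i.d.\ and independent of $S$ to pull out $\E[e^{\lambda X}]$. By the branching property $\E[\sum_{|v|=n}e^{\lambda S_v}]=e^{n\nu(\lambda)}$, while $\E[e^{\lambda X}]=\E[Y^{\lambda/\theta}]\,\Gamma(1-\lambda/\theta)$ because $Y,E$ are independent and $\E[E^{-s}]=\Gamma(1-s)$ for $s<1$. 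This is finite exactly when $\lambda<\theta$ (gamma factor) and $\lambda<\theta_0$ (moment of $Y$, by the hypothesis on $\mu$); since $\theta_0\le\theta$ the binding constraint is $\lambda<\theta_0$. Hence $\P(R_n^*>an)\le C(\lambda)\,e^{n(\nu(\lambda)-\lambda a)}$ for every $\lambda<\theta_0$. As $\theta_0$ minimises $\nu(\theta)/\theta$, letting $\lambda\uparrow\theta_0$ gives $\inf_{\lambda<\theta_0}\nu(\lambda)/\lambda=\nu(\theta_0)/\theta_0$, so for any $a>\nu(\theta_0)/\theta_0$ one can choose $\lambda<\theta_0$ with $\nu(\lambda)/\lambda<a$; the bound is then summable in $n$, and Borel--Cantelli gives $\limsup_n R_n^*/n\le a$ a.s. Letting $a\downarrow\nu(\theta_0)/\theta_0$ along a rational sequence finishes this half.

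For the lower bound, fix $\epsilon>0$ and set $\alpha=\nu(\theta_0)/\theta_0$. The first ingredient is that there are exponentially many particles just below the frontier speed. Choosing $\theta_1<\theta_0$ close to $\theta_0$, convergence of the additive martingale $W_n(\theta_1)$ to a positive limit, cf.\ \eqref{additive martingale limit} (or, more robustly, Biggins' large-deviation count for level sets), shows that $N_n:=\#\{|v|=n : S_v\ge(\alpha-\epsilon/2)n\}\ge e^{cn}$ for some $c=c(\epsilon)>0$, eventually almost surely; here one uses $\nu'(\theta_1)\uparrow\nu'(\theta_0)=\alpha$ together with $\nu(\theta_1)-\theta_1\nu'(\theta_1)>0$ for $\theta_1<\theta_0$ (immediate since $\theta\mapsto\nu(\theta)-\theta\nu'(\theta)$ is decreasing and vanishes at $\theta_0$). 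The second ingredient is that among these many particles at least one carries a perturbation that is not too negative. Conditioning on $S$, the set of such particles is determined and the $X_v$ over it are i.i.d.\ and independent of $S$, so on $\{N_n\ge e^{cn}\}$,
\[
\P\left(X_v<-\tfrac{\epsilon}{2}n\ \text{for all such } v \,\middle|\, S\right)=\P\left(X<-\tfrac{\epsilon}{2}n\right)^{N_n}\le \P\left(X<-\tfrac{\epsilon}{2}n\right)^{e^{cn}}.
\]
Since $X$ is a proper random variable, $\P(X<-\epsilon n/2)\to0$, so the right-hand side is at most $2^{-e^{cn}}$ for large $n$, which is summable; the conditional Borel--Cantelli lemma then forces, almost surely, the existence for all large $n$ of a particle with $S_v\ge(\alpha-\epsilon/2)n$ and $X_v\ge-\epsilon n/2$, whence $R_n^*\ge S_v+X_v\ge(\alpha-\epsilon)n$. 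Sending $\epsilon\downarrow0$ yields $\liminf_n R_n^*/n\ge\alpha$ a.s.

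The main obstacle is the lower bound, and specifically securing the exponential abundance of near-frontier particles under hypotheses that only constrain $\mu$: this is the step where one must supply the integrability underlying $W_n(\theta_1)\to W_{\theta_1}^\infty>0$ (or appeal to the assumption-light large-deviation count) and check it against the standing assumptions on the branching random walk itself. The remaining points are bookkeeping: the limit $\lambda\uparrow\theta_0$ in the upper bound relies on continuity of $\nu$ up to $\theta_0$, and the conditioning on $S$ in the lower bound must be set up carefully so that the perturbations may legitimately be treated as i.i.d.\ and independent of the tree.
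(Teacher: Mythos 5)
Your upper bound is correct and is essentially the paper's own argument in different clothing: your union bound with the exponential moment $e^{\lambda X}$, $\lambda<\theta_0$, is the same computation as the paper's Markov inequality applied to $Y_n(\theta)^{\delta}$ with $\delta=\lambda/\theta<\theta_0/\theta$ (via the coupling \eqref{coupling} and the subadditivity bound \eqref{eq:st}), and both exploit exactly the hypothesis that $\mu$ has finite $r$-th moments for $r<\theta_0/\theta$ together with $\Gamma(1-\delta)<\infty$.

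The lower bound is where you have a genuine gap, and you have correctly located it yourself: the exponential abundance $N_n=\#\{|v|=n: S_v\ge(\alpha-\epsilon/2)n\}\ge e^{cn}$ eventually a.s.\ is not established under the hypotheses of the theorem. Your primary route through the additive martingale fails twice over: positivity of $W^\infty_{\theta_1}$ requires the $L\log L$ condition \eqref{eq:h2} at $\theta_1$, which is \emph{not} among the assumptions of Theorem \ref{a.s. convergence above} (it is only imposed in Theorems \ref{a.s. convergence below} and \ref{convergence in distribution below the boundary}, and there at the parameter $\gamma\theta$); and even granting $W_n(\theta_1)\to W^\infty_{\theta_1}>0$, convergence of the martingale does not by itself yield a particle count --- one still needs a truncation argument showing that the particles below level $(\alpha-\epsilon/2)n$ contribute negligibly to $W_n(\theta_1)$ before one can divide by the maximal weight $e^{\theta_1 R_n-n\nu(\theta_1)}$. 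The fallback you mention (Biggins' a.s.\ counting theorem for level sets) would close the gap under the standing assumptions, but you neither state nor verify it. The paper avoids this machinery entirely by a different mechanism: via the coupling \eqref{coupling} it reduces everything to $Y_n(\theta)=\sum_{|v|=n}e^{\theta S_v}Y_v$, bounds this sum from below by the \emph{single} term at the rightmost particle $v_n$ (so that only $R_n/n\to\nu(\theta_0)/\theta_0$ a.s.\ and $\P(Y<e^{-n\theta\delta})\to 0$ are needed), obtains convergence in probability, and then upgrades to almost sure convergence by cutting the tree at generation $[\sqrt{n}]$ and using that the $N_{[\sqrt n]}$ independent copies of $R^*_{n-[\sqrt n]}$ all fail to reach the target level with probability at most $\varepsilon^{n}+\left(\P(N=1)\right)^{[\sqrt n]-\lceil\log_2 n\rceil}$, which is summable. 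If you want to keep your near-frontier counting strategy, you must either import Biggins' theorem explicitly (checking it needs no more than $\nu$ finite near $\theta_1<\theta_0$) or add the missing truncation step and justify the positivity of the martingale limit; otherwise the argument as written does not prove the theorem under its stated hypotheses.
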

In particular, the conditions of Theorem \ref{a.s. convergence above} hold if $\mu$ satisfies $\Hg$ and $ \theta \geq \frac {\theta_0} \gamma$.

The results concerning convergence in distribution we split into three cases.

\begin{theorem}[Below the boundary case]\label{convergence in distribution below the boundary}
Assume that condition $\Hg$ is satisfied and $\E \left[W_1(\gamma \theta) \log_+ W_1(\gamma \theta)\right] < \infty$ for $\theta <  \frac {\theta_0} \gamma$. Then
\[
R^*_n - n \frac {\nu \left(\gamma \theta\right)} {\gamma \theta} \xrightarrow[n \rightarrow \infty]{d} \frac 1 \theta \left(\log H_\theta - \log E\right),
\]
where $H_\theta$ is finite and positive almost surely, and $E$ is exponential with intensity 1, independent of $H_\theta$. Furthermore, $H_\theta$ has the characteristic function $\mathbb{E}\left[\tilde{g}\left(t\left(W^{\infty}_{\gamma \theta}\right)^\gamma\right)\right]$ where $W^\infty_{\gamma \theta}$ is the limit from (\ref{additive martingale limit}) and $\tilde{g}$ is defined in \eqref{g}.
\end{theorem}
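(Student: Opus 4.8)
The plan is to push everything through the coupling \eqref{coupling}. Set $a_n = e^{n\nu(\gamma\theta)/\gamma}$. Multiplying the centred quantity by $\theta$ and using \eqref{coupling},
\[
\theta\Bigl(R^*_n - n\tfrac{\nu(\gamma\theta)}{\gamma\theta}\Bigr) \stackrel{d}{=} \log\frac{Y_n}{a_n} - \log E,
\]
where $E\sim\mathrm{Exp}(1)$ is independent of $Y_n$. Since the law of $E$ is fixed and $(x,e)\mapsto\tfrac1\theta(\log x-\log e)$ is continuous on $(0,\infty)^2$, it is enough to prove that $Y_n/a_n$ converges in distribution to a positive random variable $H_\theta$; the representation $\tfrac1\theta(\log H_\theta-\log E)$ of the limit, with $H_\theta$ and $E$ independent, then follows from the continuous mapping theorem applied to the pair $(Y_n/a_n, E)$, whose coordinates are independent with $E$ of fixed law. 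Thus the whole statement reduces to a stable-type limit theorem for the normalised random weighted sum $Y_n/a_n$.

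To analyse $Y_n/a_n$ I would condition on the branching structure $\mathcal F_\infty = \sigma(S_v : v\in\T)$. Given $\mathcal F_\infty$ the perturbation variables $\{Y_v\}$ are i.i.d.\ with law $\mu$ and the weights $b_v := e^{\theta S_v}/a_n$ are measurable, so, with $\phi$ the characteristic function of $\mu$,
\[
\E\bigl[e^{itY_n/a_n}\mid\mathcal F_\infty\bigr] = \prod_{|v|=n}\phi(tb_v).
\]
Assumption $\Hg$ is exactly the statement that $\mu$ lies in the domain of attraction of $\tilde g$ from \eqref{g}, i.e. $\phi(s) = 1 - k|s|^\gamma\bigl(1 - i\tan(\tfrac{\pi\gamma}{2})\,\mathrm{sign}\,s\bigr) + o(|s|^\gamma)$ as $s\to0$. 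Because $b_v>0$, substituting this expansion, taking logarithms and summing should give
\[
\log\prod_{|v|=n}\phi(tb_v) = -k|t|^\gamma\bigl(1 - i\tan(\tfrac{\pi\gamma}{2})\,\mathrm{sign}\,t\bigr)\sum_{|v|=n}b_v^\gamma + o(1),
\]
and the decisive algebraic identity is $\sum_{|v|=n}b_v^\gamma = e^{-n\nu(\gamma\theta)}\sum_{|v|=n}e^{\gamma\theta S_v} = W_n(\gamma\theta)$, the additive martingale at parameter $\gamma\theta$.

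Since $\theta<\theta_0/\gamma$ gives $\gamma\theta<\theta_0$, and the hypothesis $\E[W_1(\gamma\theta)\log_+W_1(\gamma\theta)]<\infty$ is precisely \eqref{eq:h2} at $\gamma\theta$, the convergence \eqref{additive martingale limit} yields $W_n(\gamma\theta)\to W^\infty_{\gamma\theta}$ almost surely, with a finite positive limit. Feeding this into the display, the conditional characteristic function converges, for each fixed $t$, almost surely to $\exp\bigl(-k|t|^\gamma(1-i\tan(\tfrac{\pi\gamma}{2})\mathrm{sign}\,t)\,W^\infty_{\gamma\theta}\bigr) = \tilde g\bigl(t\,(W^\infty_{\gamma\theta})^{1/\gamma}\bigr)$, the last equality being the scaling relation for $\tilde g$ (replacing $t$ by $ct$ multiplies the tail scale by $c^\gamma$, so $c^\gamma=W^\infty_{\gamma\theta}$). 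As conditional characteristic functions are bounded by $1$ in modulus, dominated convergence gives $\E[e^{itY_n/a_n}]\to\E[\tilde g(t(W^\infty_{\gamma\theta})^{1/\gamma})]$, which is continuous at $t=0$ because $W^\infty_{\gamma\theta}<\infty$ a.s.; by Lévy's continuity theorem this is the characteristic function of the limit $H_\theta$, as claimed.

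The step I expect to be the main obstacle is making the passage from the product of the $\phi(tb_v)$ to its exponential leading term rigorous, since the number of factors $\#\{|v|=n\}$ grows and the weights are random. The engine is the uniform smallness of the weights: $\max_{|v|=n}b_v = e^{\theta R_n - n\nu(\gamma\theta)/\gamma}\to0$ almost surely, which follows from $R_n/n\to\nu(\theta_0)/\theta_0$ (see \eqref{a.s. convergence of the supremum}) together with the strict inequality $\tfrac{\nu(\theta_0)}{\theta_0}<\tfrac{\nu(\gamma\theta)}{\gamma\theta}$, valid because $\theta_0$ is the argument minimising $\nu(\cdot)/(\cdot)$ and $\gamma\theta<\theta_0$. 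Once $\max_v b_v\to0$, the error $r(s)=\log\phi(s)+k|s|^\gamma(1-i\tan(\tfrac{\pi\gamma}{2})\mathrm{sign}\,s)=o(|s|^\gamma)$ can be summed as $\sum_{|v|=n}|r(tb_v)|\le\varepsilon(\delta)|t|^\gamma\sum_{|v|=n}b_v^\gamma = \varepsilon(\delta)|t|^\gamma W_n(\gamma\theta)$ with $\varepsilon(\delta)\to0$ as $\delta=\max_v b_v\to0$, and the right-hand side vanishes because $W_n(\gamma\theta)$ stays bounded. Controlling this error uniformly, and confirming that $H_\theta$ is genuinely positive and non-degenerate, is where the real work lies.
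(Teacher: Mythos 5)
Your proposal follows essentially the same route as the paper: reduce via the coupling \eqref{coupling} to a distributional limit for $Y_n(\theta)e^{-n\nu(\gamma\theta)/\gamma}$, identify the weights $A_v=e^{\theta S_v-n\nu(\gamma\theta)/\gamma}$, and verify precisely the two conditions $\sup_{|v|=n}A_v\to 0$ (from \eqref{a.s. convergence of the supremum} and the strict decrease of $t\mapsto\nu(t)/t$ on $(0,\theta_0)$) and $\sum_{|v|=n}A_v^\gamma=W_n(\gamma\theta)\to W^\infty_{\gamma\theta}$. The only structural difference is that the paper feeds these two conditions into Lemma \ref{convergence of random sums - below the boundary} (quoted from \cite{boundary}), whereas you re-derive that lemma by conditioning on the branching structure and expanding the product of characteristic functions; your sketch of that derivation is the standard proof of such transfer results and is sound. (Incidentally, your limit characteristic function $\E[\tilde g(t(W^\infty_{\gamma\theta})^{1/\gamma})]$ agrees with the lemma and with the paper's own proof; the exponent $\gamma$ in the theorem statement appears to be a typo.)

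The one genuine omission is the almost sure positivity of $H_\theta$. You flag it as ``where the real work lies'' but give no argument, and it is not optional: it is asserted in the statement, and your continuous-mapping step (applying $\log$ to $Y_n/a_n$) requires the limit law to put no mass at $0$. The paper proves it via the Ces\`aro identity $\P(H_\theta=0)=\lim_{T\to\infty}\frac{1}{2T}\int_{-T}^{T}\E\bigl[\tilde g\bigl(t(W^\infty_{\gamma\theta})^{1/\gamma}\bigr)\bigr]\,dt$, bounding the integrand by $\E\bigl[e^{-k|t|^\gamma W^\infty_{\gamma\theta}}\bigr]$, splitting on $\{W^\infty_{\gamma\theta}\le\varepsilon\}$, and using that $W^\infty_{\gamma\theta}>0$ a.s.\ together with integrability of $t\mapsto e^{-k\varepsilon|t|^\gamma}$. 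Alternatively, the form of the characteristic function gives $H_\theta\stackrel{d}{=}(W^\infty_{\gamma\theta})^{1/\gamma}Z_\gamma$ with $Z_\gamma$ a positive strictly $\gamma$-stable variable independent of $W^\infty_{\gamma\theta}$, and both factors are positive a.s. Either way, this step must be written out for the argument to be complete.
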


\begin{theorem}[The boundary case]\label{convergence in distribution - boundary case}
Assume $\La$ and $\Lb$.
 If $\mu$ satisfies $\Hg$, then for $\theta = \frac {\theta_0} \gamma$
\[
R^*_n - n \frac {\nu \left(\theta_0\right)} {\theta_0} + \frac 1 {2 \theta_0} \log n \xrightarrow[n \rightarrow \infty]{d} \frac 1 \theta \left(\log H_{\theta_0} - \log E\right)
\]
where $H_{\theta_0}$ is finite and positive almost surely and $E$ is exponential with intensity 1, independent of $H_{\theta_0}$. Furthermore, $H_{\theta_0}$ has the characteristic function $\mathbb{E}\left[\tilde{g}\left(t (c_\infty D_\infty)^\gamma\right)\right]$, where
$\tilde{g}$ is defined in \eqref{g}.

    \end{theorem}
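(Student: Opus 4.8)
The plan is to reduce everything to the asymptotics of $Y_n = Y_n(\theta)$ through the coupling \eqref{coupling}. Since $\theta = \theta_0/\gamma$ we have $\gamma\theta = \theta_0$, so setting $b_n = \exp\!\big(\tfrac n\gamma \nu(\theta_0)\big)\, n^{-1/(2\gamma)}$ and using $\tfrac\theta{\theta_0} = \tfrac1\gamma$, the identity \eqref{coupling} turns the deterministic shift into exactly $\log b_n$, giving $\theta\big(R^*_n - n\tfrac{\nu(\theta_0)}{\theta_0} + \tfrac1{2\theta_0}\log n\big) \stackrel{d}{=} \log(Y_n/b_n) - \log E$. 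Hence it suffices to prove that $Y_n/b_n$ converges in distribution to a positive random variable $H_{\theta_0}$ with the stated characteristic function; applying the continuous mapping theorem to $x\mapsto \tfrac1\theta(\log x - \log E)$, together with the independence of $E$ from the branching data, then yields the theorem. Positivity of $H_{\theta_0}$, needed so that $\log H_{\theta_0}$ is a.s.\ finite, will follow from $D_\infty>0$ a.s.\ under \La and \Lb and from the fact that the limiting law is a positive $\gamma$-stable law.

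The core is a conditional stable limit theorem for $Y_n = \sum_{|v|=n} e^{\theta S_v} Y_v$. I would condition on $\mathcal F_n = \sigma(S_v : |v|=n)$, with respect to which the weights $a_v := e^{\theta S_v}$ are measurable and $\{Y_v\}_{|v|=n}$ are i.i.d.\ with law $\mu$. Writing $\phi(s) = \E[e^{isY}]$, the conditional characteristic function of $Y_n/b_n$ is $\prod_{|v|=n}\phi(t a_v/b_n)$. Assumption $\Hg$ places $\mu$ in the domain of attraction of the stable law \eqref{g}, so $\log\phi(s) = \log\tilde g(s) + o(|s|^\gamma) = -k|s|^\gamma\big(1 - i\tan(\pi\gamma/2)\,\mathrm{sign}\,s\big) + o(|s|^\gamma)$ as $s\to0$. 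Summing the expansion over $|v|=n$, the leading term carries the factor
\[
\sum_{|v|=n} (a_v/b_n)^\gamma = b_n^{-\gamma}\, e^{n\nu(\theta_0)} W_n(\theta_0) = n^{1/2} W_n(\theta_0),
\]
since $a_v^\gamma = e^{\gamma\theta S_v} = e^{\theta_0 S_v}$ and $\sum_{|v|=n} e^{\theta_0 S_v} = e^{n\nu(\theta_0)}W_n(\theta_0)$. By \eqref{convergence of normalized additive martingale} this converges in probability to $c_\infty D_\infty$. Thus, modulo the error term, the conditional characteristic function converges to $\tilde g\big(t\,(c_\infty D_\infty)^{1/\gamma}\big)$, whose $|t|^\gamma$-coefficient is precisely the martingale limit $c_\infty D_\infty$; a bounded-convergence argument (all integrands have modulus at most $1$) then upgrades this to convergence of the unconditional characteristic function $\E[e^{itY_n/b_n}]$ to $\E\big[\tilde g(t(c_\infty D_\infty)^{1/\gamma})\big]$, identifying $H_{\theta_0}$.

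The main obstacle is controlling the error term, i.e.\ establishing the uniform infinitesimality that renders the $o(|s|^\gamma)$ contributions negligible after summation. Bounding $\sum_{|v|=n} o\big((t a_v/b_n)^\gamma\big)$ by $\sup_{|s|\le \varepsilon_n}\tfrac{|\log\phi(s)-\log\tilde g(s)|}{|s|^\gamma}\cdot\sum_{|v|=n}(a_v/b_n)^\gamma$ with $\varepsilon_n = t\max_{|v|=n} a_v/b_n$, one sees it is enough to show $\max_{|v|=n} a_v/b_n \to 0$ in probability, so that $\varepsilon_n\to0$ while the total mass stays bounded. In $\gamma$-th powers this reads $\max_{|v|=n} e^{\theta_0 S_v}/b_n^\gamma = n^{1/2}\exp\!\big(\theta_0 R_n - n\nu(\theta_0)\big)\to 0$, which is exactly where one needs a sharp upper bound on the branching random walk maximum, namely $R_n \le \tfrac{n\nu(\theta_0)}{\theta_0} - \tfrac{3}{2\theta_0}\log n + O_{\P}(1)$; this logarithmic correction, valid under \La and \Lb, gives $\theta_0 R_n - n\nu(\theta_0)\le -\tfrac32\log n + O_{\P}(1)$ and hence $\max_{|v|=n} a_v^\gamma/b_n^\gamma = O_{\P}(n^{-1})\to0$. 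Incidentally, the same computation explains the centering: the $n^{1/2}$ in \eqref{convergence of normalized additive martingale} produces the $n^{-1/(2\gamma)}$ in $b_n$, which after the factor $1/\theta$ becomes exactly the $\tfrac1{2\theta_0}\log n$ correction. Once infinitesimality is secured, the three ingredients — the domain-of-attraction expansion, the negligible maximum, and the critical additive-martingale limit \eqref{convergence of normalized additive martingale} — combine to give the conditional, and then the unconditional, convergence, completing the proof.
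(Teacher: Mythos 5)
Your proposal is correct and takes essentially the same route as the paper: both reduce via the coupling \eqref{coupling} to the convergence in distribution of $n^{1/(2\gamma)}e^{-n\nu(\theta_0)/\gamma}Y_n(\theta)$ and verify the same two hypotheses, namely $\sum_{|v|=n}A_v^\gamma=n^{1/2}W_n(\theta_0)\to c_\infty D_\infty$ via \eqref{convergence of normalized additive martingale} and $\sup_{|v|=n}A_v\to 0$ in probability, the only differences being that you inline a proof of the weighted stable limit theorem that the paper invokes as Lemma \ref{convergence of random sums - below the boundary} and that you justify the negligible maximum by the $\tfrac{3}{2\theta_0}\log n$ tightness of $R_n$ rather than by the reference the paper cites. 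One harmless remark: the limiting characteristic function your argument yields, $\E\big[\tilde{g}\big(t(c_\infty D_\infty)^{1/\gamma}\big)\big]$, agrees with the paper's own proof and with Lemma \ref{convergence of random sums - below the boundary}, so the exponent $\gamma$ in the theorem statement appears to be a typo rather than a discrepancy in your argument.
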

\begin{theorem}[Above the boundary case]\label{convergence in distribution above the boundary}
Assume $\La$, $\Lb$ and that for all $s \in \mathbb{R}$, $\P\left(\xi_1, \xi_2, \dots, \in s\mathbb{Z}\right) < 1$.
  If $\theta > \theta_0$ and $\mu$ has a finite $r$-th moment for some $r > \frac {\theta_0} {\theta}$ and is not concentrated on a single point, then
\[
R_n^* - n \frac {\nu(\theta_0)}{\theta_0} + \frac {3\log n} {2 \theta_0} \xrightarrow[n \rightarrow \infty]{d} \frac 1 \theta \left(\log Z - \log E\right)
\]
where $Z$ is finite and positive almost surely and $E$ is exponential with intensity 1, independent of $Z$. Furthermore, $Z \stackrel{d}{=} (D_\infty)^{ \frac \theta {\theta_0}} Z_{\frac {\theta_0} {\theta} }$, where $D_\infty$ is the almost sure limit of the derivative martingale defined in \eqref{derivative martingale limit} and $Z_{\frac {\theta_0} {\theta}}$ is strictly $\frac {\theta_0} {\theta}$-stable independent of $D_\infty$. 
\end{theorem}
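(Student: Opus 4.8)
The plan is to reduce the statement to the coupling identity \eqref{coupling} and to analyse the weighted sum $Y_n(\theta)=\Sum e^{\theta S_v}Y_v$ through the extremal process of the underlying branching random walk. Set $c_n=n^{3\theta/(2\theta_0)}e^{-\theta n\nu(\theta_0)/\theta_0}$. Since in \eqref{coupling} the variable $E$ is independent of $Y_n$, and the target variable $E$ is independent of $Z$, multiplying $R^*_n-n\nu(\theta_0)/\theta_0+\tfrac{3}{2\theta_0}\log n$ by $\theta$ and cancelling the common $-\log E$ reduces the theorem to proving the distributional convergence $c_nY_n(\theta)\xrightarrow{d}Z$, where $Z$ is positive almost surely; the continuous mapping theorem and independence of $E$ then give the claimed limit $\tfrac1\theta(\log Z-\log E)$. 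Writing $m_n=\tfrac{\nu(\theta_0)}{\theta_0}n-\tfrac{3}{2\theta_0}\log n$ for the standard centering of the maximum, one checks $c_n e^{\theta S_v}=e^{\theta(S_v-m_n)}$, so that $c_nY_n(\theta)=\Sum e^{\theta(S_v-m_n)}Y_v$ is a linear statistic of the recentered cloud $\Sum\delta_{S_v-m_n}$ marked by the i.i.d.\ perturbations $Y_v$.

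To compute the limiting law I would first condition on the branching random walk $S$: since $\{Y_v\}$ is independent of $S$, for $\lambda>0$ and a generic $Y$ with law $\mu$,
\[
\E\big[e^{-\lambda c_nY_n(\theta)}\,\big|\,S\big]=\prod_{|v|=n}\E\big[e^{-\lambda e^{\theta(S_v-m_n)}Y}\big]=\exp\Big(\Sum\log\E\big[e^{-\lambda e^{\theta(S_v-m_n)}Y}\big]\Big).
\]
Under $\La$, $\Lb$ and the non-lattice hypothesis the recentered process $\Sum\delta_{S_v-m_n}$ converges in law to a decorated Poisson point process whose (random) intensity is $c_\star\theta_0 D_\infty e^{-\theta_0 x}\,dx$ for an explicit constant $c_\star$, with $D_\infty$ the derivative-martingale limit of \eqref{derivative martingale limit}. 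Setting $\psi_\lambda(x)=\E[e^{-\lambda e^{\theta x}Y}]-1$ and passing to the limit, the Laplace functional of this Cox process gives, conditionally on $D_\infty$,
\[
\E\big[e^{-\lambda Z}\,\big|\,D_\infty\big]=\exp\Big(c_\star\theta_0 D_\infty\int_{\R}\psi_\lambda(x)\,e^{-\theta_0 x}\,dx\Big)=\exp\big(-\kappa\,D_\infty\,\lambda^{\theta_0/\theta}\big),
\]
where the second equality comes from the substitution $u=\lambda e^{\theta x}$, turning the integral into $\lambda^{\theta_0/\theta}$ times the finite constant $\tfrac1\theta\int_0^\infty(\E[e^{-uY}]-1)u^{-\theta_0/\theta-1}\,du<0$, and $\kappa>0$ collects the constants. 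Finiteness of this integral at the origin is exactly where the moment hypothesis is used: for $\beta\in(\theta_0/\theta,r)$ one has $1-\E[e^{-uY}]\le u^{\beta}\E[Y^\beta]$, so the integrand is dominated by $u^{\beta-\theta_0/\theta-1}$ near $0$, integrable precisely because $\beta>\theta_0/\theta$; at infinity $\E[e^{-uY}]-1\to-1$ and $u^{-\theta_0/\theta-1}$ is integrable. The decoration only replaces $Y$ by the cluster contribution $\sum_j e^{\theta\Delta_j}Y_j$, which has a finite $(\theta_0/\theta)$-moment under the same hypothesis, and thus only changes the value of $\kappa$.

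The transform $\exp(-\kappa D_\infty\lambda^{\theta_0/\theta})$ identifies $Z$: by the scaling property of stable laws it is the conditional Laplace transform of $(D_\infty)^{\theta/\theta_0}Z_{\theta_0/\theta}$, where $Z_{\theta_0/\theta}$ is strictly $(\theta_0/\theta)$-stable (its scale absorbing $\kappa$) and independent of $D_\infty$; this is exactly the asserted representation, with $\theta/\theta_0=1/(\theta_0/\theta)$. Alternatively, one may quote the convergence of $c_nY_n(\theta)$ directly from the description of $Y_n$ above the boundary in \cite{above}, the extra i.i.d.\ marking by $Y_v$ being absorbed by the thinning property of the limiting Poisson process.

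The main obstacle is the rigorous passage to the limit in the functional $\Sum\log\E[e^{-\lambda e^{\theta(S_v-m_n)}Y}]$: the test function is not compactly supported, the cloud $\Sum\delta_{S_v-m_n}$ has infinitely many points escaping to $-\infty$, and the marks $Y_v$ are only assumed to have a finite moment of some order exceeding $\theta_0/\theta$. I would control this by truncating at a level $-A$. The contribution of particles with $S_v-m_n<-A$ is estimated uniformly in $n$ using $\theta>\theta_0$ together with the bound $1-\E[e^{-uY}]\le u^\beta\E[Y^\beta]$, so that it tends to $0$ as $A\to\infty$; the contribution of the particles above $-A$ converges by the weak convergence of the recentered process applied to the truncated, bounded continuous functional. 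Establishing this uniform smallness — equivalently tightness of $c_nY_n(\theta)$ and the absence of mass escaping to infinity — together with the matching control of the decoration clusters (ensuring that their contributions have a finite $(\theta_0/\theta)$-moment) is the technical heart of the argument.
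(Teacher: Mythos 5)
Your reduction via the coupling identity \eqref{coupling} is exactly the paper's: both arguments boil the theorem down to showing $n^{3\theta/(2\theta_0)}e^{-n\theta\nu(\theta_0)/\theta_0}Y_n(\theta)\xrightarrow{d}Z$ and then apply the continuous mapping theorem together with the independence of $E$. Where you diverge is in how that convergence is obtained. The paper does essentially no analysis here: it rewrites the weighted sum as $n^{3\vartheta/2}\sum_{|u|=n}e^{-V_u}Y_u$ for the shifted walk $V_u=-\theta(S_u-|u|\nu(\theta_0)/\theta_0)$ with critical parameter $\vartheta=\theta_0/\theta$, checks that the hypotheses \La, \Lb, the non-lattice condition and the moment condition on $\mu$ translate into the assumptions of Proposition 3.2 of \cite{above}, and imports both the convergence and the representation $Z\stackrel{d}{=}D^{1/\vartheta}Z_\vartheta$ from equation (1.13) there; the only genuine computation is identifying the derivative martingale of $-\vartheta V$ with $D_n$. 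Your route instead re-derives the content of that citation from the convergence of the recentered extremal process to a decorated Poisson point process with intensity proportional to $D_\infty e^{-\theta_0x}\,dx$, computing the conditional Laplace transform $\exp(-\kappa D_\infty\lambda^{\theta_0/\theta})$ and reading off the stable representation. This is a legitimate and more self-contained (if much heavier) path — it makes transparent where the exponent $\theta_0/\theta$ and the factor $D_\infty^{\theta/\theta_0}$ come from, and your treatment of the moment hypothesis at the origin of the integral is exactly right — but it requires the full extremal-process convergence as input, which is a substantially stronger theorem than anything the paper invokes directly.

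One step in your sketch is glossed over more than it can bear: the claim that the decoration ``only changes the value of $\kappa$.'' After the substitution $u=\lambda e^{\theta x}$ the cluster contribution $C=\sum_je^{\theta\Delta_j}Y_j$ must satisfy $\E[C^\beta]<\infty$ for some $\beta>\theta_0/\theta$ for the integral to converge at $0$, and the naive subadditivity bound $\bigl(\sum_je^{\theta\Delta_j}\bigr)^{\theta_0/\theta}\le\sum_je^{\theta_0\Delta_j}$ lands exactly on the critical exponent, where the expectation over the decoration law diverges. You need $\E\bigl[\sum_je^{\beta\theta\Delta_j}\bigr]<\infty$ for some $\beta\theta>\theta_0$, which is true but rests on quantitative control of the intensity of decoration points near $-\infty$; this is a real piece of work, not a remark. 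Together with the truncation/tightness argument you yourself defer, this is precisely the technical content that Proposition 3.2 of \cite{above} packages, so in a final write-up you would either supply these estimates or fall back on the citation — which is what the paper does.
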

In particular, if $\mu$ has finite mean and $\theta > \theta_0$, or if $\mu$ satisfies $\Hg$ and $\theta > \frac {\theta_0} \gamma$, then the assumption in the last theorem is satisfied. It is worth noting that the logarithmic correction term in Theorem \ref{convergence in distribution above the boundary} is the same as in the case of a classic BRW without perturbations (see \cite{aidekon}). This result is also more extensive than Theorem 2.6 in \cite{lpm-brw}, where the asymptotics were only given for the case $\mu = \delta_1$.

\section{Proofs of Theorems \ref{convergence in distribution below the boundary}, \ref{convergence in distribution - boundary case}, and \ref{convergence in distribution above the boundary}}

First we recall Lemma 4.1 from \cite{boundary} (presented here in a slightly more accessible form for our use), that will be useful to understand behaviour of the asymptotics of $Y_n$.  
\begin{lemma}\label{convergence of random sums - below the boundary}
Let $\{Y_v\}_{v \in \T }$ be i.i.d. random variables with distribution $\mu$ satisfying $\Hg$, and $\{A_v\}_{v \in \T }$ be a sequence of positive random variables, independent of $\{Y_v\}_{v \in \T }$, such that 

\[
\sum_{|v|=n} A_v^\gamma \xrightarrow[n \rightarrow \infty]{\P} A \quad \mbox{and}\quad \sup_{|v|=n} A_v \xrightarrow[n \rightarrow \infty]{\mathbb{P}} 0
\]
for some positive random variable $A$.
Then
\[
\sum_{|v|=n} A_v Y_v \xrightarrow[n \rightarrow \infty]{d} H
\]
where $H$ has the characteristic function $ \varphi_H\left(t\right) = \mathbb{E}\left[\tilde{g}(tA^{\frac 1 \gamma})\right]$.

\end{lemma}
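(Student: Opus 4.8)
The plan is to condition on the weights and exploit the independence of $\{A_v\}$ and $\{Y_v\}$, reducing the statement to a classical domain-of-attraction computation for a triangular array of independent summands. Write $\phi(s) = \E[e^{isY}]$ for the characteristic function of $\mu$, and let $\mathcal{A}$ denote the $\sigma$-algebra generated by $\{A_v\}_{v\in\T}$. Since the $\{Y_v\}$ are i.i.d. and independent of $\mathcal{A}$, conditionally on $\mathcal{A}$ the summands $\{A_vY_v\}_{|v|=n}$ are independent, so
\[
\E\!\left[\exp\!\Big(it\!\sum_{|v|=n}A_vY_v\Big)\,\Big|\,\mathcal{A}\right]\;=\;\prod_{|v|=n}\phi(tA_v).
\]

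First I would record the behaviour of $\phi$ near the origin. Assumption $\Hg$ places $\mu$ in the domain of attraction of the $\gamma$-stable law with characteristic function $\tilde{g}$ from \eqref{g}, and the classical characteristic-function expansion for positive random variables with regularly varying tails of index $\gamma\in(0,1)$ gives
\[
\phi(s)\;=\;1 - k|s|^\gamma\big(1 - i\tan(\tfrac{\pi\gamma}{2})\,\mathrm{sign}(s)\big) + |s|^\gamma\rho(s),
\]
where $\rho(s)\to 0$ as $s\to 0$ and $k$ is the constant in \eqref{g}. Fixing $t\neq 0$, taking logarithms, and using that $A_v>0$ forces $\mathrm{sign}(tA_v)=\mathrm{sign}(t)$ and $|tA_v|^\gamma=|t|^\gamma A_v^\gamma$, I obtain
\[
\sum_{|v|=n}\log\phi(tA_v)\;=\;-k|t|^\gamma\big(1 - i\tan(\tfrac{\pi\gamma}{2})\,\mathrm{sign}(t)\big)\sum_{|v|=n}A_v^\gamma\;+\;\mathcal{E}_n,
\]
where $\mathcal{E}_n$ collects the remainder $\sum_{|v|=n}|tA_v|^\gamma\rho(tA_v)$ together with the quadratic error from $\log(1+z)=z+O(z^2)$.

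The core of the argument is to show $\mathcal{E}_n\to 0$ in probability. Fix $\epsilon>0$ and choose $\delta>0$ with $|\rho(s)|\le\epsilon$ for $|s|\le\delta$. On the event $\{\sup_{|v|=n}A_v\le\delta/|t|\}$, whose probability tends to $1$ by hypothesis, the remainder is bounded by $\epsilon|t|^\gamma\sum_{|v|=n}A_v^\gamma$, while the quadratic contribution is controlled by $\sup_{|v|=n}|tA_v|^\gamma\cdot\sum_{|v|=n}|tA_v|^\gamma = \big(|t|^\gamma\sup_{|v|=n}A_v^\gamma\big)\cdot\big(|t|^\gamma\sum_{|v|=n}A_v^\gamma\big)$. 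Since $\sum_{|v|=n}A_v^\gamma\to A$ and $\sup_{|v|=n}A_v\to 0$ in probability, both pieces vanish in probability as $n\to\infty$ and then $\epsilon\downarrow 0$; a subsequence argument, extracting an a.s. convergent subsequence of the pair $(\sum_{|v|=n}A_v^\gamma,\sup_{|v|=n}A_v)$, makes this rigorous. Consequently the conditional characteristic function converges in probability,
\[
\prod_{|v|=n}\phi(tA_v)\;\xrightarrow[n\to\infty]{\P}\;\exp\!\Big(-k|t|^\gamma\big(1 - i\tan(\tfrac{\pi\gamma}{2})\,\mathrm{sign}(t)\big)A\Big)=\tilde{g}\big(tA^{1/\gamma}\big),
\]
the last identity again using $A>0$.

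Finally, since each factor $\phi(tA_v)$ has modulus at most $1$, the product is dominated by $1$, so bounded convergence upgrades the conditional statement to
\[
\E\!\left[\exp\!\Big(it\!\sum_{|v|=n}A_vY_v\Big)\right]=\E\!\left[\prod_{|v|=n}\phi(tA_v)\right]\xrightarrow[n\to\infty]{}\E\big[\tilde{g}(tA^{1/\gamma})\big]=\varphi_H(t).
\]
The limit is continuous at $t=0$ (again by dominated convergence, as $\tilde{g}(0)=1$ and $\tilde{g}$ is continuous), so Lévy's continuity theorem yields $\sum_{|v|=n}A_vY_v\xrightarrow{d}H$ with the claimed characteristic function. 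The main obstacle is the uniform control of $\mathcal{E}_n$: the expansion of $\phi$ is valid only near $0$, so one must genuinely use $\sup_{|v|=n}A_v\to 0$ to guarantee that every argument $tA_v$ lies in the regime where $|\rho|\le\epsilon$, and then combine this with the in-probability (rather than a.s.) convergence of $\sum_{|v|=n}A_v^\gamma$ through the subsequence extraction.
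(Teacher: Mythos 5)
Your proof is correct. Note, however, that the paper does not prove this lemma at all: it is quoted verbatim (in slightly adapted form) as Lemma 4.1 of \cite{boundary}, so there is no internal proof to compare against. Your argument --- conditioning on $\{A_v\}$, expanding $\phi(s)=1-k|s|^\gamma\left(1-i\tan(\tfrac{\pi\gamma}{2})\,\mathrm{sign}(s)\right)+|s|^\gamma\rho(s)$ near the origin, using $\sup_{|v|=n}A_v\to 0$ to stay in the regime where the expansion and the logarithm are valid, passing to almost surely convergent subsequences, and finishing with bounded convergence and L\'evy's continuity theorem --- is the standard route to such transfer results and is essentially how the cited reference establishes it; the constant $k$ you use does match the one in \eqref{g} via the reflection formula for the Gamma function.
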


\begin{proof}[Proof of Theorem \ref{convergence in distribution below the boundary}]
First we will prove that
\begin{equation}\label{eq:h3}
Y_n(\theta) e^{-n \frac {\nu\left(\gamma \theta\right)}{\gamma}} \xrightarrow[n \rightarrow \infty]{d} H_\theta
\end{equation}
where $H_\theta$ has the characteristic function $\mathbb{E}\tilde{g}\big(t(W^\infty_{\gamma \theta})^{\frac 1 \gamma}\big)$ and moreover $H_\theta$ is positive almost surely.
For this purpose we will use Lemma \ref{convergence of random sums - below the boundary} for $A_v = e^{\theta S_v - |v|\frac {\nu\left(\theta \gamma\right)} {\gamma}}$. To check its hypotheses observe
\[
\sup_{|v|=n} A_v = \sup_{|v|=n} e^{n \theta \left(\frac  {S_v} n -  \frac {\nu\left(\theta \gamma\right)} { \theta \gamma}\right)} = e^{n \theta \left(  \frac{R_n}n -  \frac {\nu\left(\theta \gamma\right)} { \theta \gamma}\right)}. 
\]
As $\frac{R_n}{n}  \xrightarrow[n \rightarrow \infty]{a.s.} \frac {\nu(\theta_0)} { \theta_0}$ and $\theta_0$ is the argument minimizing $\frac {\nu\left(t\right)} { t}$, then $n \theta \left(  \frac{R_n}{n} -  \frac {\nu\left(\theta \gamma\right)} { \theta \gamma}\right) \xrightarrow[n \rightarrow \infty]{a.s.}  -\infty$ and so $ \sup_{|v|=n} A_v \xrightarrow[n \rightarrow \infty]{a.s.}  0$. Furthermore, in view of \eqref{additive martingale limit},
\[
\sum_{|v|=n} A_v^\gamma = W_n\left(\theta \gamma\right)
\]
converges to $W^\infty_{\gamma \theta}$, which is positive almost surely. Summarizing,  Lemma \ref{convergence of random sums - below the boundary} 
entails \eqref{eq:h3}

To see that $H_\theta$ is positive almost surely, choose any $\varepsilon > 0$. Then using Exercise 3.3.2 from \cite{durrett}
\begin{align*}
    \mathbb{P}\left(H_\theta = 0\right) &= \lim_{T \rightarrow \infty} \frac 1 {2T} \int_{-T}^T \mathbb{E}\left[\tilde{g}\big(t(W^\infty_{\gamma \theta})^{\frac 1 \gamma}\big) \right]dt  
    \\ &\leq \lim_{T \rightarrow \infty} \frac 1 {2T} \int_{-T}^T \mathbb{E} \left[e^{-k|t|^\gamma  W^\infty_{\gamma \theta}}\right]dt 
    \\&= \lim_{T \rightarrow \infty} \frac 1 {2T} \int_{-T}^T \left( \mathbb{E}\left[ e^{-k|t|^\gamma | W^\infty_{\gamma \theta}|} ; W^\infty_{\gamma \theta} \leq \varepsilon \right] + \mathbb{E} \left[e^{-k|t|^\gamma  W^\infty_{\gamma \theta}}; W^\infty_{\gamma \theta} > \varepsilon\right] \right)dt 
    \\&\leq \mathbb{P}\left(W^\infty_{\gamma \theta} \leq \varepsilon\right) + \lim_{T \rightarrow \infty} \frac 1 {2T} \int_{-T}^T e^{-k|t|^\gamma \varepsilon} dt
\end{align*}
Now the function $t \rightarrow e^{-k \varepsilon |t|^{\gamma}}$ is integrable, hence the limit of the second term is 0 for any $\varepsilon$. The first term can be made arbitrarily small through the choice of $\varepsilon$. Since we know that $W^\infty_{\gamma \theta}$ is positive almost surely, we conclude positivity of $H_\theta$.

Next, recalling \eqref{coupling}
\[
R^*_n - n \frac {\nu \left(\gamma \theta\right)} {\gamma \theta} \stackrel{d}{=} \frac 1 \theta \left( \log Y_n(\theta) - \log E \right) - n \frac {\nu \left(\gamma \theta\right)} {\gamma \theta} = \frac 1 \theta \left( \log Y_n(\theta) e^{-n \frac {\nu\left(\gamma \theta\right)} {\gamma} } - \log E \right)
\]
with $E \sim Exp\left(1\right)$ independent of $Y_n$. Finally, by \eqref{eq:h3} and the continuous mapping theorem 
\[
\log \left(Y_n(\theta) e^{-n \frac {\nu\left(\gamma \theta\right)}{\gamma}}\right) \xrightarrow[n \rightarrow \infty]{d} \log H_\theta
\]
where distribution of $H_\theta$ is as specified in the statement.
\end{proof}

\begin{proof}[Proof of Theorem \ref{convergence in distribution - boundary case}(the boundary case)]
Define $A_v = e^{ \theta S_v - n \frac {\nu \left(\theta_0\right) \theta} {\theta_0} + \frac {\theta} {2 \theta_0} \log n}$ then, by \eqref{convergence of normalized additive martingale}, we have 
\[
\sum_{|v| = n} A^\gamma_v = n^{\frac 1 2} W_n \xrightarrow[n \rightarrow \infty]{\mathbb{P}} \left( \frac 2 {\pi \sigma^2} \right)^{\frac 1 2} D_\infty = Z_{\theta_0}
\]

The second condition of Lemma \ref{convergence of random sums - below the boundary}, $\sup_{|v|=n} A_v \xrightarrow[n \rightarrow \infty]{\mathbb{P}} 0$, follows by applying Proposition A.3. in \cite{iksanov} to $V_u = \theta_0 S_u - |u| \nu(\theta_0)$.
Then once again by Lemma \ref{convergence of random sums - below the boundary} 
\begin{equation}\label{eq:nn1}
Y_n(\theta) n^{\frac {\theta} {2 \theta_0}}e^{-n \frac {\theta \nu(\theta_0)}{\theta_0}} = \sum_{|v| = n} A_v Y_v \xrightarrow[n \rightarrow \infty]{d} H_{\theta_0}.
\end{equation}
Next, by \eqref{coupling}
\begin{align*}
&R^*_n - n \frac {\nu \left(\theta_0\right)} {\theta_0} + \frac 1 {2 \theta_0} \log n \stackrel{d}{=} \frac 1 \theta \left( \log Y_n(\theta) - \log E \right) - n \frac {\nu \left(\theta_0\right)} {\theta_0} + \frac 1 {2 \theta_0} \log n\\&= \frac 1 \theta \left( \log \big( n^{\frac {\theta} {2 \theta_0} }Y_n(\theta) e^{-n \frac {\theta \nu(\theta_0)} {{\theta_0}} } \big) - \log E \right) 
\end{align*}
and by \eqref{eq:nn1} and the continuous mapping theorem 
\[
\log \big( n^{\frac {\theta} {2 \theta_0} }Y_n(\theta) e^{-n \frac {\theta \nu(\theta_0)} {{\theta_0}} } \big) \xrightarrow[n \rightarrow \infty]{d} \log H_{\theta_0}
\]
where distribution of $H_{\theta_0}$ is as specified in the statement.
\end{proof}

\begin{proof}[Proof of Theorem \ref{convergence in distribution above the boundary} (above the boundary case)]

The proof relies on Proposition 3.2 in \cite{above}. The assumptions for Theorem \ref{convergence in distribution above the boundary} with condition $\theta_0 \nu'(\theta_0) = \nu(\theta_0)$ for $S$ are equivalent to assumptions \textbf{(A1)} through  \textbf{(A3)} from \cite{above} for a BRW
\[
V_u= -\theta \left(S_u - |u|\frac {\nu(\theta_0)} {\theta_0} \right)
\]
with critical parameter $\vartheta = \frac {\theta_0} \theta$. Proposition 3.2 in \cite{above} entails
\[
 n^{\frac 3 2 \vartheta} \sum_{|u|=n} e^{ -V_u} Y_u \xrightarrow[n \rightarrow \infty]{d} Z
\]
where Z is positive almost surely. Furthermore, by equation (1.13) in \cite{above}, we have $Z \stackrel{d}{=} D^{ \frac 1 \vartheta} Z_\vartheta$, where $D$ is the limit of the derivative martingale associated with $-\vartheta V$, and $Z_\vartheta$ is strictly $\vartheta$-stable independent of $D$. If we let $\psi$ be the log-Laplace transform of $- \vartheta V$, then it satisfies the equation $\psi(1) = 0 = \psi'(1)$, so the derivative martingale associated with $- \vartheta V$ is 
\[
\sum_{|u|=n} \vartheta V_u e^{-\vartheta V_u} =  -\sum_{|v|=n} \left(\theta_0 S_v - n \nu(\theta_0) \right) e^{\theta_0 S_v - n\nu(\theta_0)} = D_n
\] 
so $D^{\frac 1 \vartheta} = (D_\infty)^{ \frac \theta {\theta_0}}$. Therefore
\[
n^{\frac {3\theta}{2\theta_0}} Y_n(\theta) e^{n \frac {\theta \nu \left(\theta_0\right)} {\theta_0}} = n^{\frac 3 2 \eta} \sum_{|u|=n} e^{ V_u} Y_u \xrightarrow[n \rightarrow \infty]{d} Z
\]
where the distribution of $Z$ is as in the statement.
By \eqref{coupling} we have 
\begin{align*}
R_n^* - n \frac {\nu(\theta_0)} {\theta_0} + \frac {3\log n} {2 \theta_0} &\stackrel{d}{=} \frac 1 \theta \left(\log Y_n(\theta) - \log E\right) - n \frac {\nu(\theta_0)} {\theta_0} + \frac {3 \log n} {2 \theta_0} \\ &= \frac 1 \theta \left(\log n^{\frac {3\theta}{2\theta_0}} Y_n(\theta) e^{-n \frac {\theta \nu \left(\theta_0\right)} {\theta_0}} - \log E\right) \\& 
\end{align*}
and by  the continuous mapping theorem 
\[
\log n^{\frac {3\theta}{2\theta_0}} Y_n(\theta) e^{-n \frac {\theta \nu \left(\theta_0\right)} {\theta_0}} \xrightarrow[n \rightarrow \infty]{d} \log Z
\]
which completes the proof.
\end{proof}

\section{Proof of Theorems \ref{a.s. convergence below} and \ref{a.s. convergence above}}

We start with the following Lemma, which gives the convergence in probability.

\begin{lemma}\label{convergence in probability}
\item (a) If conditions of Theorem \ref{a.s. convergence below} hold, then
\begin{align*}
 \frac {R_n^*} n \xrightarrow[ n \rightarrow \infty]{\P} \frac {\nu\left(\gamma \theta\right)} {\gamma \theta}
 \end{align*}
\item (b) If conditions of Theorem \ref{a.s. convergence above} hold, then
\begin{equation*}
 \frac {R_n^*} n \xrightarrow[ n \rightarrow \infty]{\P}
 \frac {\nu(\theta_0)} {\theta_0}
\end{equation*}
\end{lemma}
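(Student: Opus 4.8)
The plan is to handle the two parts by different routes, both anchored on the coupling \eqref{coupling}, which reduces every question about $R_n^*$ to the growth rate of $Y_n(\theta)=\sum_{|v|=n}e^{\theta S_v}Y_v$. For part (a) I would simply observe that its hypotheses are literally those of Theorem \ref{convergence in distribution below the boundary}, which has already been established and whose proof does not rely on the present lemma, so no circularity arises. That theorem gives that $R_n^*-n\,\nu(\gamma\theta)/(\gamma\theta)$ converges in distribution, hence this sequence is tight; therefore for every $\varepsilon>0$,
\[
\P\left(\left|\frac{R_n^*}{n}-\frac{\nu(\gamma\theta)}{\gamma\theta}\right|>\varepsilon\right)=\P\left(\left|R_n^*-n\frac{\nu(\gamma\theta)}{\gamma\theta}\right|>n\varepsilon\right)\xrightarrow[n\to\infty]{}0,
\]
since a tight sequence divided by $n$ tends to $0$ in probability. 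This settles (a).

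For part (b) the distributional theorem is unavailable, because Theorem \ref{convergence in distribution above the boundary} requires a moment of order strictly above $\theta_0/\theta$, whereas here only moments strictly below $\theta_0/\theta$ are assumed; so I would argue directly. Via \eqref{coupling} it suffices to prove $\frac1n\log Y_n\xrightarrow{\P}\theta\,\nu(\theta_0)/\theta_0$, as $\frac1n\log E\to0$. For the upper bound I would fix $r\in(0,\theta_0/\theta)$ and use subadditivity $(\sum_i a_i)^r\le\sum_i a_i^r$ (valid as $r\le1$) together with the independence of $\{Y_v\}$ and $S$ to obtain
\[
\E\big[Y_n^r\big]\le\E\big[Y^r\big]\,\E\Big[\sum_{|v|=n}e^{r\theta S_v}\Big]=\E\big[Y^r\big]\,e^{n\nu(r\theta)},
\]
where $\E[Y^r]<\infty$ by hypothesis and $\nu(r\theta)<\infty$ since $r\theta<\theta_0$. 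Markov's inequality then yields $\P(\frac1n\log Y_n>a)\le\E[Y^r]\,e^{n(\nu(r\theta)-ar)}$, which vanishes whenever $a>\nu(r\theta)/r$; letting $r\uparrow\theta_0/\theta$ and using that $\nu(t)/t$ is minimized at $\theta_0$ shows $\nu(r\theta)/r\to\theta\,\nu(\theta_0)/\theta_0$ from above, so the bound at level $\theta\,\nu(\theta_0)/\theta_0+\delta$ holds for every $\delta>0$.

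For the matching lower bound I would keep only the maximal particle: writing $v_n^*$ for the argmax of $S_v$ over $|v|=n$,
\[
\frac1n\log Y_n\ge\frac{\theta R_n}{n}+\frac1n\log Y_{v_n^*},
\]
where $\theta R_n/n\to\theta\,\nu(\theta_0)/\theta_0$ almost surely by \eqref{a.s. convergence of the supremum}, and $Y_{v_n^*}\sim\mu$ (since $v_n^*$ is measurable with respect to $S$, which is independent of the i.i.d. marks), so that $\frac1n\log Y_{v_n^*}\to0$ in probability. Combining the two bounds gives $\frac1n\log Y_n\xrightarrow{\P}\theta\,\nu(\theta_0)/\theta_0$, and transferring back through \eqref{coupling} yields $R_n^*/n\xrightarrow{d}\nu(\theta_0)/\theta_0$, equivalently in probability as the limit is constant.

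The main obstacle is the lower bound in (b): it is precisely because $\theta\ge\theta_0$ that the single extremal particle already carries the correct exponential rate $\theta\,\nu(\theta_0)/\theta_0$, so discarding the bulk loses nothing — the same estimate would undershoot below the boundary, which is exactly why (a) must instead borrow the sharper distributional result. A secondary point needing care is the passage $r\uparrow\theta_0/\theta$ in the upper bound, which is what forces the assumption of finite moments of every order below $\theta_0/\theta$ and relies on the continuity and minimality of $\nu(t)/t$ at $\theta_0$.
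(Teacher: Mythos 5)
Your proposal is correct and follows essentially the same route as the paper: part (a) is deduced from the already-established Theorem \ref{convergence in distribution below the boundary} (the paper phrases this as convergence in distribution of $\frac{1}{n\theta}\log\bigl(Y_n(\theta)e^{-n\nu(\gamma\theta)/\gamma}\bigr)$ to $0$ rather than tightness of the centered $R_n^*$, but the content is identical), and part (b) uses the same Markov/subadditivity bound $\E[Y_n^\delta]\le \E[Y^\delta]e^{n\nu(\delta\theta)}$ with $\delta$ close to $\theta_0/\theta$ for the upper bound and the single extremal particle $v_n=\arg\max_{|v|=n}S_v$ with $Y_{v_n}\sim\mu$ for the lower bound. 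No gaps.
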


\begin{proof}
Let $\beta = \gamma \theta$ in case (a) and $\beta = \theta_0$ in case (b). We will prove first that
\begin{equation}\label{eq:c3}
\frac 1 {n \theta} \log \big(Y_n(\theta) e^{-n \frac { \theta \nu\left(\beta\right)}{\beta}} \big) \xrightarrow[n \rightarrow \infty]{\mathbb{P}} 0
\end{equation}

We consider first the case (b).  Fix an arbitrary $\varepsilon > 0$ and choose $\delta<\frac {\theta_0} {\theta}$ satisfying 
\[
\frac {\nu(\theta_0)} { \theta_0} -\frac {\nu \left(\delta \theta\right)} {\delta \theta} + \varepsilon > 0.
\] The Markov inequality yields
\begin{align*}
    \mathbb{P}\left( \frac 1 {n \theta} \log\big( Y_n(\theta) e^{-n \frac {\theta \nu \left(\theta_0\right)} {\theta_0} }\big) > \varepsilon\right) 
    &=  \mathbb{P}\left( \delta \log \big(Y_n(\theta) e^{-n \frac {\theta \nu \left(\theta_0\right)} {\theta_0} }\big) > n \theta \delta \varepsilon\right) 
    \\&= \mathbb{P}\left(  Y_n(\theta)^\delta e^{-n \delta \frac {\theta \nu \left(\theta_0\right)} {\theta_0} } > e^{n \theta \delta \varepsilon}\right) 
    \\&\leq \mathbb{E}[Y_n\left(\theta\right)^\delta] e^{- \delta n \theta \left(\frac {\nu(\theta_0)} { \theta_0} + \varepsilon \right)}.
\end{align*}
Since $\delta < 1$ and for any $v$ the random variable $Y_v$ is independent of $S_v$, we obtain
\begin{equation}\label{eq:st}
     \mathbb{E}[Y_n(\theta)^\delta] = \mathbb{E}\bigg[\bigg(\sum_{|v|=n} e^{\theta  S_v} Y_v\bigg)^\delta\bigg] \leq \mathbb{E}\bigg[\sum_{|v|=n} e^{\theta \delta S_v} Y_v^\delta\bigg] = e^{n\nu\left(\theta \delta\right)} \mathbb{E}[Y^\delta],
\end{equation} where the last expectation is finite. Summarizing
\begin{align*}
        \mathbb{P}\left( \frac 1 {n \theta} \log\big( Y_n(\theta) e^{-n \frac {\theta \nu \left(\theta_0\right)} {\theta_0} }\big) > \varepsilon\right) 
        \leq \mathbb{E}[Y^\delta] e^{- \delta n \theta \left(\frac {\nu(\theta_0)} { \theta_0} -\frac {\nu \left(\delta \theta\right)} {\delta \theta} + \varepsilon \right)}
\end{align*}
and thanks to our choice of $\delta$ the above expression converges to 0 as $n$ tends to $+\infty$.
To prove the remaining bound, denote $v_n = \arg \max_{|v| = n} S_v$ . Since
\begin{align*}
    \frac 1 {n \theta} \log\big( Y_n(\theta) e^{-n \frac { \theta \nu(\theta_0)}{\theta_0}}\big) \geq \frac 1 {n \theta} \log \big( e^{n \theta\left( \frac {S_{v_n}} n - \frac {\nu(\theta_0)} {\theta_0} \right)}Y_{v_n} \big) = \frac {R_n} n - \frac {\nu(\theta_0)} {\theta_0} + \frac 1 {n \theta} \log Y_{v_n},
\end{align*}
for any parameters $0 < \delta < \varepsilon$ we obtain
\begin{align*}
\mathbb{P}\left( \frac 1 {n \theta} \log \big(Y_n(\theta) e^{-n \frac {\theta \nu \left(\theta_0\right)} {\theta_0} } \big)< -\varepsilon\right)&\leq \mathbb{P} \left(\frac {R_n} n - \frac {\nu(\theta_0)} {\theta_0} + \frac 1 {n \theta} \log Y_{v_n} < - \varepsilon \right) \\&= \mathbb{P} \left(e^{ n \theta \left( \frac {R_n} n - \frac {\nu(\theta_0)} {\theta_0} \right) }  Y_{v_n} <  e^ {- \varepsilon n \theta} \right) \\&=
\mathbb{P} \left(e^{ n \theta \left( \frac {R_n} n - \frac {\nu(\theta_0)} {\theta_0}  + \varepsilon \right) }Y_{v_n} <  1\right) \\&\leq  
\mathbb{P} \left(e^{ n \theta \delta} Y_{v_n} <  1\right) + \mathbb{P} \left( \frac {R_n} n - \frac {\nu(\theta_0)} {\theta_0}  + \varepsilon <\delta \right)
\end{align*}
Now, since $\frac {R_n} n$ converges almost surely to $ \frac{\nu(\theta_0)} {\theta_0}$ and $\delta < \varepsilon$, the second term converges to 0. For the first term we have
\[
\mathbb{P} \left(e^{ n \theta \delta} Y_{v_n} <  1\right) = \mathbb{P} \left(Y < e^ {- n \theta \delta }\right) \rightarrow 0.
\]
Thus, we conclude the proof of \eqref{eq:c3} for case (b). 

For case (a), by Theorem \ref{convergence in distribution below the boundary}, $\log  Y_n(\theta)e^{-n\frac {\nu\left(\gamma \theta\right)} \gamma}$ converges in distribution to $\log H_\theta$ and this limit is finite almost surely. Therefore $\frac 1 {n \theta} \log \big( Y_n(\theta)e^{-n\frac {\nu\left(\gamma \theta\right)} \gamma})$ converges in distribution to 0, and hence the convergence holds in probability as well. Thus, the proof of \eqref{eq:c3} is completed.

To prove Lemma \ref{convergence in probability} notice  that using \eqref{coupling} we can write 
\begin{align*}
    \frac {R^*_n(\theta)} n &\stackrel{d}{=} \frac {\log Y_n(\theta)} {n \theta} - \frac {\log E } {n \theta} = \frac 1 {n \theta} \log \big( Y_n(\theta)e^{-n\frac {\theta \nu\left(\beta\right)} {\beta}}\big) + \frac {\nu \left( \beta\right)} {\beta} - \frac {\log E } {n \theta} 
\end{align*}
Now $\frac {\log E } {n \theta} $ converges to 0 almost surely and by \eqref{eq:c3}, $\frac 1 {n \theta} \log  Y_n(\theta)e^{-n\frac {\theta \nu\left(\beta\right)} {\beta}}$ converges to 0 in probability. That completes the proof of the Lemma.
\end{proof}

\begin{proof}[Proof of Theorems \ref{a.s. convergence below} and \ref{a.s. convergence above} (almost sure convergence)]
To prove the almost sure convergence we adopt here the arguments given in the proof of Theorem 2.1 in \cite{lpm-brw}.  For the sake of completeness, we present a complete proof. Again let $\beta = \gamma \theta$ if conditions of Theorem \ref{a.s. convergence below} are satisfied and $\beta = \theta_0$ if conditions of Theorem \ref{a.s. convergence above} are satisfied. We start with the upper bound
 \begin{equation}\label{eq:n2}
\limsup\limits_{n \rightarrow \infty}{\frac {R^*_n(\theta)} n  } \leq \frac {\nu\left(\beta\right)} {\beta}
\hspace{0.5cm}\text{ a.s.}
\end{equation}
Fix any $\varepsilon > 0$. By \eqref{coupling} and the Markov inequality we get that for any $\delta < \min(\frac {\theta_0} {\theta}, 1)$
\begin{align*}
    \mathbb{P}\left( \frac {R^*_n(\theta)} n - \frac {\nu\left( \beta\right)} {\beta} > \varepsilon\right) &=
    \mathbb{P}\left( {\theta \delta R^*_n(\theta)} -  \frac {\theta \delta n \nu\left( \beta\right)} {\beta } > n \delta \theta \varepsilon\right)\\ & =
    \mathbb{P}\left( \log \frac {Y_n(\theta)^\delta} {E^\delta} -   \frac {\theta \delta n \nu\left( \beta\right)} {\beta} > n \delta \theta \varepsilon\right) \\ &\leq
    e^{ -\delta n \theta \left(\frac {\nu\left(\beta\right)} {\beta} + \varepsilon \right)} \mathbb{E}\left[E^{-{\delta }}\right] \mathbb{E}\left[Y_n(\theta)^{ \delta }\right] \\ &\le  e^{-  \delta n \theta \left(\frac {\nu\left(\beta\right)} {\beta} - \frac {\nu\left(\theta\delta\right)} {\theta\delta} + \varepsilon \right)} \Gamma\left(1 - \delta \right) \mathbb{E}\left[Y^{ \delta }\right],
\end{align*} where the last inequality follows from \eqref{eq:st}. 

Since $\nu$ is continuous, we can choose $\delta$ so that \[\frac {\nu\left(\beta\right)} {\beta} -\frac {\nu \left(\delta \theta\right)} {\delta \theta} + \varepsilon > 0,\]
therefore the series
\[
\sum_{n=1}^\infty \mathbb{P}\left( \frac {R^*_n(\theta)} n - \frac {\nu\left( \beta\right)} {\beta} > \varepsilon\right)    
\]
converges. The Borel-Cantelli lemma and arbitrariness of $\varepsilon$ entails \eqref{eq:n2}.

Finally our goal is to prove the lower bound

		\begin{equation}\label{eq:n4}
			\liminf_{n\rightarrow\infty} \frac{R_n^*\left(\theta\right)}{n}\geq \frac{\nu\left(\beta\right)}{\beta} \hspace {0.5cm}\text{ a.s.}
\end{equation}

For $u$ such that $|u|=m\leq n$, we define
		\[
		R_{n-m}^{*\left(u\right)}\left(\theta\right):=\max_{v>u, |v|=n} \left(S\left(v\right)+\frac{1}{\theta}\log \left(Y_v/E_v\right)\right)- S\left(u\right).
		\]
		Note that $\{R_{n-m}^{*\left(u\right)}\left(\theta\right)\}_{|u|=m}$ are i.i.d. and have the same distribution as $R_{n-m}^*\left(\theta\right)$. Now,
		\begin{align*}
			R_{n}^*\left(\theta\right)
			&=\max_{|u|=m}\max_{v>u, |v|=n}\left(S\left(v\right)+\frac{1}{\theta}\log \left(Y_v/E_v\right)\right)\\
			&=\max_{|u|=m}\left( S\left(u\right)+R_{n-m}^{*\left(u\right)}\left(\theta\right)\right)\\
			&\geq  S\left(\tilde{u}_m\right)+\max_{|u|=m}\left(R_{n-m}^{*\left(u\right)}\left(\theta\right)\right), 
		\end{align*}	
		where
		\[
		\tilde{u}_m:=\arg\max_{|u|=m}\left(R_{n-m}^{*\left(u\right)}\left(\theta\right)\right).
		\]	
		Now, for any $\varepsilon\in\left(0,1\right)$ and small $s$ such that  $\nu(- {s} /2)$ is finite,
		\begin{align*}
			&\mathbb{P}\left( \frac{R_n^*\left(\theta\right)}{n}-\frac{\nu\left(\beta\right)}{\beta}<-\varepsilon \right)\\
			&\leq\,\,  \mathbb{P}\left(S\left(\tilde{u}_{[\sqrt{n}]}\right)+\max_{|u|=[\sqrt{n}]}\left(R_{n-[\sqrt{n}]}^{*\left(u\right)}\left(\theta\right)\right)<n\left(\frac{\nu\left(\beta\right)}{\beta}-\varepsilon\right)\right) \\
			&\leq \,\,
			\mathbb{P}\left(\max_{|u|=[\sqrt{n}]}\left(R_{n-[\sqrt{n}]}^{*\left(u\right)}\left(\theta\right)\right)<n\left(\frac{\nu\left(\beta\right)}{\beta}-\frac{\varepsilon}{2}\right)\right)
			+
			\mathbb{P}\left(S\left(\tilde{u}_{[\sqrt{n}]}\right)<-\frac{n\varepsilon}{2}\right)\\
			&\leq\,\, 
			\mathbb{E}\left[\mathbb{P}\left(R_{n-[\sqrt{n}]}^*\left(\theta\right)<n\left(\frac{\nu\left(\beta\right)}{\beta}-\frac{\varepsilon}{2}\right)\right)^{N_{[\sqrt{n}]}}\right]+
			e^{-n\varepsilon s/4}\cdot\mathbb{E}\left[e^{-s S\left(\tilde{u}_{[\sqrt{n}]}\right)/2}\right],
		\end{align*}
        where $N_k$ is the number of offspring in $k$-th generation. Recalling Lemma \ref{convergence in probability}, for all large enough $n$,
		\[
		\mathbb{P}\left(R_{n-[\sqrt{n}]}^*\left(\theta\right)<n\left(\frac{\nu\left(\beta\right)}{\beta}-\frac{\varepsilon}{2}\right)\right)<\varepsilon.
		\]

		Observe that $N_k < n$ implies at least $k-\lceil\log_2n\rceil$ many particles up to generation $k$ give birth to only one offspring. Indeed, denote as $A_k$ the number of particles up to $k$-th generation that had only one child. Then clearly 
		\[
		N_k < n \implies k - \lceil \log_2 N_k \rceil \geq k - \lceil \log_2n \rceil
		\]
		so it suffices to show $k - \lceil \log_2 N_k \rceil \leq A_k$. This on the other hand is equivalent to $2^k \leq 2^{\lceil \log_2 N_k \rceil} 2^{A_k}$, which is obviously true if $A_k \geq k$. If $A_k < k$, then it means that there is at least $k - A_k$ generations in which all the particles have at least 2 children, so $N_k \geq 2^{k - A_k}$, which again gives the desired result.
		Therefore 
		\[
		\P\left(N_{[\sqrt{n}]} < n \right)\leq \left(\P\left(N=1\right)\right)^{[\sqrt{n}]-\lceil\log_2n\rceil}.
		\]
		so 
\begin{multline*}
    \mathbb{E}\left[\mathbb{P}\left(R_{n-[\sqrt{n}]}^*\left(\theta\right)<n\left(\frac{\nu\left(\beta\right)}{\beta}-\frac{\varepsilon}{2}\right)\right)^{N_{[\sqrt{n}]}}\right] \leq \E [\varepsilon^{N_{[\sqrt{n}]}}] \\\leq \E\left[\varepsilon^n ; N_{[\sqrt{n}]} \geq n\right] + \P \left(N_{[\sqrt{n}]} < n\right) \leq \varepsilon^n+\left(\P\left(N=1\right)\right)^{[\sqrt{n}]-\lceil\log_2n\rceil} \le \varepsilon_1^{\sqrt n}
\end{multline*} for some $\varepsilon_1<1$. 
To estimate the second term, we bound  supremum by the sum and we have
		\[
		\mathbb{E}\left[e^{-s S\left(\tilde{u}_{[\sqrt{n}]}\right)/2}\right]
		\leq \mathbb{E}\bigg[\sum_{|v|= \lceil \sqrt{n} \rceil}  e^{- \frac s 2 S_v}\bigg]
		=e^{[\sqrt{n}]\nu\left(-s/2\right)}.
		\]
		Therefore we have for all large enough $n$,
		\[
			\mathbb{P}\left( \frac{R_n^*\left(\theta\right)}{n}-\frac{\nu(\beta)}{\beta}<-\varepsilon \right)
			\leq \, \varepsilon_1^{\sqrt n} +e^{-n\varepsilon s/4+[\sqrt{n}]\nu\left(-s/2\right)}.
	\]
		Since for every $\varepsilon\in\left(0,1\right)$,
		\begin{equation*}
			\label{bc-}
			\sum_{n=1}^{\infty} \P\left( \frac{R_n^*\left(\theta\right)}{n}-\frac{\nu(\beta)}{\beta}<-\varepsilon \right)<\infty,
		\end{equation*}
		using Borel-Cantelli Lemma once again we deduce \eqref{eq:n4}, completing the proof.
\end{proof}


\begin{thebibliography}{9}

\bibitem{aidekon}
 E. Aïdékon (2013). \emph{Convergence in law of the minimum of a branching random walk.}  	Annals of Probability 2013, Vol. 41, No. 3A, 1362-1426 MR3098680

   \bibitem{aidekon-shi}
 E. Aïdékon, Z. Shi (2014). \emph{The Seneta-Heyde scaling for the branching random walk.} Annals of Probability 2014, Vol. 42, No. 3, 959–993 MR3189063


\bibitem{lpm-brw}
A. Bandyopadhyay, P. P. Ghosh (2023). \emph{Right-Most Position of a Last Progeny Modified Branching Random Walk},  arXiv:2106.02880v2

\bibitem{inhm-time}
A. Bandyopadhyay, P. P. Ghosh (2023). \emph{Right-most position of a last progeny modified time inhomogeneous branching random walk}, Statistics \& Probability Letters
Vol. 193, February 2023, 109697 MR4499430

  \bibitem{biggins2}
 J. D. Biggins (1976). \emph{The First- and Last-Birth Problems for a Multitype Age-Dependent Branching Process }  Advances in Applied Probability
Vol. 8, No. 3 (Sep., 1976), pp. 446-459  MR0420890


 \bibitem{biggins}
 J. D. Biggins(1977). \emph{Martingale convergence in the branching random walk.} J. Appl. Probab. 14, 25–37. MR0433619

 \bibitem{boundary}
K. Bogus, D. Buraczewski and A. Marynych (2020). \emph{Self-similar solutions of kinetic-type equations: The boundary case}, Stochastic Processes and their Applications 130 (2020) 677–693 MR4046515

\bibitem{above}
 D. Buraczewski, K. Kolesko, M. Meiners (2021). \emph{Self-similar solutions to kinetic-type evolution equations: beyond the boundary case.} Electron. J. Probab. 26 1 - 18, 2021. MR4216515

\bibitem{durrett}
 	R. Durrett (2019). \emph{Probability: Theory and Examples} Cambridge University Press, 2019 MR3930614

\bibitem{large-deviations} P. P. Ghosh (2022) \emph{Large deviations for the right-most position of a last progeny modified branching random walk.} Electron. Commun. Probab. 27 1 - 13, 2022. MR4375913


   \bibitem{iksanov}
 A. Iksanov, K. Kolesko, M. Meiners (2018). \emph{Fluctuations of Biggins' martingales at complex parameters},  Annales de l Institut Henri Poincaré Probabilités et Statistiques 56(4):2445-2479 MR4164844
 
\bibitem{brw}
Z. Shi, \emph{Branching random walks} (2015).  École d'Été de Probabilités de Saint-Flour XLII – 2012, Springer MR3444654





 

 





 
 
     


\end{thebibliography}
\end{document}